%%%%%%%%%%%%%%%%%%%%%%% file typeinst.tex %%%%%%%%%%%%%%%%%%%%%%%%%
%
% This is the LaTeX source for the instructions to authors using
% the LaTeX document class 'llncs.cls' for contributions to
% the Lecture Notes in Computer Sciences series.
% http://www.springer.com/lncs       Springer Heidelberg 2006/05/04
%
% It may be used as a template for your own input - copy it
% to a new file with a new name and use it as the basis
% for your article.
%
% NB: the document class 'llncs' has its own and detailed documentation, see
% ftp://ftp.springer.de/data/pubftp/pub/tex/latex/llncs/latex2e/llncsdoc.pdf
%
%%%%%%%%%%%%%%%%%%%%%%%%%%%%%%%%%%%%%%%%%%%%%%%%%%%%%%%%%%%%%%%%%%%

\documentclass[runningheads,a4paper]{llncs}

\usepackage{amsmath,amssymb}
\usepackage{enumerate}

\usepackage{hyperref}

\usepackage{cleveref}

%\usepackage[all]{hypcap}

%%%%%%%%%%%%%%%%%%%%%%%%%%%%%%%%%%%%%%%%%%%%%%%%%%%%%%%%%%%%%%%%%%%%%%%%%%%%%%%%%%%%%%%
%% Command definitions %%%%%%%%%%%%%%%%%%%%%%%%%%%%%%%%%%%%%%%%%%%%%%%%%%%%%%%%%%%%%%%%
%%%%%%%%%%%%%%%%%%%%%%%%%%%%%%%%%%%%%%%%%%%%%%%%%%%%%%%%%%%%%%%%%%%%%%%%%%%%%%%%%%%%%%%

\def\N{\mathbb N}
\def\A{\mathcal A}
\def\B{\mathcal B}
\def\C{\mathcal C}

\def\PT{{\mathcal P}_{\Theta}}
\def\DT{{D}_{\Theta}}
\def\DTa{{D}_{\Theta_1}}

\def\Lan{ \mathcal{L}}
\def\Lu{{\mathcal L}(\uu)}
\def\uu{\mathbf u}
\def\vv{\mathbf v}

\def\Pal{\rm Pal}
\def\PalT{{\rm Pal}_{\Theta}}

\def \Rk#1 {$\mathcal{R}_{#1}$}

% factor complexity
\def \FC#1 {
\mathcal{C}
\ifthenelse{\equal{#1}{}}{}{(#1)}
}

% Theta-palindromic complexity
\def \PC#1 {
\mathcal{P}_{\Theta}
\ifthenelse{\equal{#1}{}}{}{(#1)}
}

% palindromic complexity
\def \PCn#1 {
\mathcal{P}
\ifthenelse{\equal{#1}{}}{}{(#1)}
}

\def \TuT {T_{\Theta}}

\def \Tr {\Theta_0}
\def \Ta {\Theta_1}
\def \Tb {\Theta_2}
\def \gT {\gamma_{\Theta}}

%%%%%%%%%%%%%%%%%%%%%%%%%%%%%%%%%%%%%%%%%%%%%%%%%%%%%%%%%%%%%%%%%%%%%%%%%%%%%%%%%%%%%%%
%% Theorems and cleveref %%%%%%%%%%%%%%%%%%%%%%%%%%%%%%%%%%%%%%%%%%%%%%%%%%%%%%%%%%%%%%
%%%%%%%%%%%%%%%%%%%%%%%%%%%%%%%%%%%%%%%%%%%%%%%%%%%%%%%%%%%%%%%%%%%%%%%%%%%%%%%%%%%%%%%

% \newtheorem{thm}{Theorem}
% \newtheorem{thrm}[thm]{Theorem}
% \newtheorem{coro}[thm]{Corollary}
% \newtheorem{corollary}[thm]{Corollary}
% \newtheorem{lem}[thm]{Lemma}
% \newtheorem{lmm}[thm]{Lemma}
% \newtheorem{claim}[thm]{Claim}
% \newtheorem{obs}[thm]{Observation}
% \newtheorem{proposition}[thm]{Proposition}
% \newtheorem{prop}[thm]{Proposition}
% \newtheorem{defi}[thm]{Definition}
% \theoremstyle{remark}
% \newtheorem{remark}[thm]{Remark}
% %\theoremstyle{example}
% \newtheorem{example}[thm]{Example}

\spnewtheorem{thm}{Theorem}{\bfseries}{\itshape}
\spnewtheorem{prop}[thm]{Proposition}{\bfseries}{\itshape}
\spnewtheorem{lem}[thm]{Lemma}{\bfseries}{\itshape}
\spnewtheorem{coro}[thm]{Corollary}{\bfseries}{\itshape}
\spnewtheorem{defi}[thm]{Definition}{\bfseries}{\itshape}
\spnewtheorem{exam}[thm]{Example}{\itshape}{\itshape}
\spnewtheorem{rem}[thm]{Remark}{\itshape}{\rmfamily}

% cleveref name definitions {label}{singular}{plural}

\crefname{thm}{theorem}{theorems}
%\crefname{thrm}{theorem}{theorems}
\crefname{coro}{corollary}{corollaries}
%\crefname{example}{example}{examples}
\crefname{exam}{example}{examples}
\crefname{lem}{lemma}{lemmas}
%\crefname{lmm}{lemma}{lemmas}
%\crefname{claim}{claim}{claims}
%\crefname{obs}{observation}{observations}
%\crefname{proposition}{proposition}{propositions}
\crefname{prop}{proposition}{propositions}
\crefname{defi}{definition}{definitions}

%\crefname{theorem}{theorem}{theorems}
%\crefname{corollary}{corollary}{corollaries}
%\crefname{lemma}{lemma}{lemmas}
%\crefname{definition}{definition}{definitions}

% \usepackage{amssymb}
\setcounter{tocdepth}{3}
% \usepackage{graphicx}

% \usepackage{url}
% \urldef{\mailsa}\path|{alfred.hofmann, ursula.barth, ingrid.haas, frank.holzwarth,|
% \urldef{\mailsb}\path|anna.kramer, leonie.kunz, christine.reiss, nicole.sator,|
% \urldef{\mailsc}\path|erika.siebert-cole, peter.strasser, lncs}@springer.com|

\newcommand{\keywords}[1]{\par\addvspace\baselineskip
\noindent\keywordname\enspace\ignorespaces#1}

\begin{document}

\mainmatter  % start of an individual contribution

% first the title is needed
\title{Infinite words rich and almost rich \\ in generalized palindromes}

% a short form should be given in case it is too long for the running head
\titlerunning{Infinite words rich and almost rich in generalized palindromes}

% the name(s) of the author(s) follow(s) next
%
% NB: Chinese authors should write their first names(s) in front of
% their surnames. This ensures that the names appear correctly in
% the running heads and the author index.
%
% \author{Alfred Hofmann%
% \thanks{Please note that the LNCS Editorial assumes that all authors have used
% the western naming convention, with given names preceding surnames. This determines
% the structure of the names in the running heads and the author index.}%
% \and Ursula Barth\and Ingrid Haas\and Frank Holzwarth\and\\
% Anna Kramer\and Leonie Kunz\and Christine Rei\ss\and\\
% Nicole Sator\and Erika Siebert-Cole\and Peter Stra\ss er}

\author{Edita Pelantov\'a \and \v St\v ep\'an Starosta}

\authorrunning{Edita Pelantov\'a \and \v St\v ep\'an Starosta}
% (feature abused for this document to repeat the title also on left hand pages)

% the affiliations are given next; don't give your e-mail address
% unless you accept that it will be published
\institute{Department of Mathematics, FNSPE, Czech Technical University in Prague, Trojanova 13, 120~00 Praha~2, Czech Republic}

%
% NB: a more complex sample for affiliations and the mapping to the
% corresponding authors can be found in the file "llncs.dem"
% (search for the string "\mainmatter" where a contribution starts).
% "llncs.dem" accompanies the document class "llncs.cls".
%

% \toctitle{Lecture Notes in Computer Science}
% \tocauthor{Authors' Instructions}
\maketitle

\begin{abstract}

We focus on $\Theta$-rich and almost $\Theta$-rich words over a
finite alphabet $\mathcal{A}$, where $\Theta$ is an involutive
antimorphism over $\mathcal{A}^*$. We show that any recurrent
almost $\Theta$-rich word $\uu$ is an image of a recurrent
$\Theta'$-rich word under a suitable morphism, where $\Theta'$ is again an involutive antimorphism. Moreover, if the word $\uu$
is  uniformly recurrent, we show that $\Theta'$ can be set to the reversal mapping.
We also treat one special case of almost $\Theta$-rich words.
We show that every $\Theta$-standard words with seed is an image of an Arnoux-Rauzy word.
\keywords{palindrome, palindromic defect, richness}
\end{abstract}

\section{Introduction}

In this paper we will deal with infinite words over a finite
alphabet $\A$. A word ${\bf u}\in \mathcal{A}^\mathbb{N}$ we are
interested in has its language $\mathcal{L}({\bf u})$ saturated,
in a certain sense, by generalized palindromes,  here called
$\Theta$-palindromes.  We will use the symbol $\Theta$ for an
involutive antimorphism, i.e., a mapping  $\Theta: \A^* \mapsto
\A^*$ such that $\Theta^2 = \rm{Id}$ and $\Theta(uv) =
\Theta(v)\Theta(u)$ for all $u,v \in \A^*$. Fixed points of
$\Theta$ are called $\Theta$-palindromes. A word $w$ is a
$\Theta$-palindrome if $\Theta(w) = w$. The most common
antimorphism used in combinatorics on words is the reversal
mapping. We will denote it by $\Tr$. The reversal mapping
associates to every word $w =  w_1 w_2 \ldots w_n$ its mirror
image $\Tr(w) = w_n w_{n-1} \ldots w_1$. In the case $w = \Tr(w)$,
we will sometimes say that $w$ is a palindrome or classical
palindrome instead of $\Tr$-palindrome.

The set of distinct $\Theta$-palindromes occurring in a finite
word $w$ will be denoted $\PalT(w)$. Since the empty word
$\varepsilon$ is a $\Theta$-palindrome for any $\Theta$, we have
a simple lower  bound $\#\PalT(w) \geq 1$.

In 2001, Droubay et al. gave in \cite{DrJuPi} an upper bound for
the reversal mapping $\Tr$. They deduced that  $\# \Pal_{\Tr} (w)
\leq |w| + 1$, where $|w|$ denotes the length of the word $w$. In
\cite{BlBrGaLa}, Brlek et al. studied involutive antimorphisms
with no fixed points of length $1$.  For such $\Theta$ they diminished the upper
bound,  they showed that $\# \PalT(w) \leq |w|$ for all non-empty
word $w$. In \cite{Sta2010}, the upper bound is precised.
  The following estimate is valid for any involutive
antimorphism $\Theta$:
\begin{equation}\label{eq:horni_mez_poctu_pali}
\# \PalT(w) \leq |w| + 1 - \gT(w),
\end{equation}
where $\gT(w) := \# \left \{ \{ a, \Theta(a)\} \mid a \text{ occurs in } w \text{ and } a \neq \Theta(a) \right \}$.
Let us note that if $\Theta = \Tr$, then $\gT(w) = 0$, and the upper bound in \eqref{eq:horni_mez_poctu_pali}
is the same bound as for usual palindromes.

According to the terminology for classical palindromes introduced
in \cite{GlJuWiZa} and for $\Theta$-palindromes in \cite{Sta2010}, we will say that a finite word $w$ is {\em
$\Theta$-rich} if  the equality in \eqref{eq:horni_mez_poctu_pali} holds. An infinite word $\uu \in
\A^{\N}$ is {\em $\Theta$-rich} if any its factor $w \in \Lu$ is
$\Theta$-rich. In \cite{BrHaNiRe}, the authors introduced the
\textit{palindromic defect} of a finite word $w$ as the difference
between the upper bound $|w| + 1$ and the actual number of
palindromic factors. We define analogously the
\textit{$\Theta$-palindromic defect} of $w$ as
$$
\DT(w) : = |w| + 1 - \gT(w) - \# \PalT(w).
$$
We define for an infinite word $\uu$ its
\textit{$\Theta$-palindromic defect} as
$$
\DT(\uu) = \sup \{ \DT(w) \mid w \in \Lu \}.
$$
Words with finite $\Theta$-palindromic defect will be referred to
as {\em almost $\Theta$-rich}.

In \cite{BuLuGlZa}, it is shown that  rich words (i.e. $\Tr$-rich
words) can be characterized using an inequality shown in
\cite{BaMaPe} for infinite words with languages closed under
reversal. Results of both mentioned papers were generalized in
\cite{Sta2010} for an arbitrary involutive antimorphism. In
particular, it is shown that if an infinite word has its language
closed under $\Theta$, the following inequality holds
\begin{equation} \label{eq:nerovnost}
 \C (n+1) -  \C (n) + 2 \geq \PT(n) + \PT(n+1) \text{   for all } n
\geq 1,
\end{equation}
where $\C(n)$ is the \textit{factor complexity} defined by $\C(n) := \# \{
w \in \Lu \mid  n =|w|  \}$ and $\PT(n)$ is the \textit{$\Theta$-palindromic
complexity} defined by $\PT(n) := \# \{ w \in \Lu \mid w =
\Theta(w) \text{ and } n =|w| \}$. The gap between the left-hand side and
 the right-hand side in \eqref{eq:nerovnost} decides about
$\Theta$-richness.   Let us therefore  denote by $\TuT(n)$
%for $n\in \N$
the quantity
$$
\TuT(n) := \C(n+1) - \C(u) + 2 - \PT(n+1) - \PT(n).
$$
In \cite{Sta2010}, it is also shown that an infinite word with
language closed under $\Theta$ is $\Theta$-rich if and only if
$$
%\Delta \C(n) + 2 = \PT(n) + PT(n+1) \text{   for all } n \geq 1.
\TuT(n) = 0 \text{   for all } n \geq 1.
$$

The list of infinite words which are $\Tr$-rich is quite
extensive. See for instance \cite{BaMaPe,BuLuLu,BuLuLuZa}.
 Examples of $\Theta$-rich words can be found in
\cite{AnZaZo}.
  Fewer examples of words with finite non-zero palindromic defect are
known. Periodic words with finite non-zero $\Tr$-defect can be
found in \cite{BrHaNiRe}, aperiodic ones  are studied in
\cite{GlJuWiZa} and \cite{BaPeSta3}. To our knowledge, examples of
words with $0 < \DT(\uu) < +\infty$ and $\Theta \neq \Tr$ have not
yet been explicitly exhibited. As we will show, such examples are
$\Theta$-standard words with seed defined in \cite{BuLuLuZa2}
and thus also their subset, standard $\Theta$-episturmian words, which can be constructed from standard episturmian words
(see \cite{BuLuLu_cha}).

 The main aim of this
paper is to show that among words with finite
$\Theta$-pa\-lindromic defect, $\Theta$-rich words, i.e. words with
$\DT(\uu) = 0$, play an important role. We will show the following
theorems.

\begin{thm} \label{thm:1}
Let $\Ta: \A^* \mapsto \A^*$ be an involutive antimorphism.
Let $\uu \in \A^{\N}$ be an infinite recurrent word such that
$\DTa(\uu) < +\infty$. Then there exist an involutive antimorphism
$\Tb: \B^* \mapsto \B^*$,  a morphism $\varphi: \B^* \mapsto \A^*$
and an infinite recurrent word $\vv \in \B^{\N}$ such that
$$
\uu = \varphi(\vv) \text{ and } \vv \text{ is } \Tb\text{-rich}.
$$
\end{thm}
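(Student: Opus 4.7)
The plan is to construct $\vv$ as a desubstitution of $\uu$ via a morphism $\varphi$ that absorbs the finite $\Ta$-defect of $\uu$, leaving $\vv$ with zero $\Tb$-defect. Since $\DTa(\uu) < +\infty$, there is a stabilization threshold $N$ beyond which no new defect appears in the factors of $\uu$: every $\Ta$-palindrome missing from a sufficiently long factor is already missing from some factor of length at most $N$. Using the recurrence of $\uu$, I would select a finite $\Ta$-invariant family $\mathcal{F} = \{p_1, \ldots, p_k\}$ of $\Ta$-palindromic factors of length at least $N$ such that every sufficiently long factor of $\uu$ contains some $p_i$.

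Next, I would let $\B$ be an alphabet in bijection with the set of complete return words to $\mathcal{F}$ in $\uu$, define $\varphi: \B^* \to \A^*$ by sending each letter to its corresponding return word, and take $\vv \in \B^{\N}$ to be the (recurrent) word with $\uu = \varphi(\vv)$ after adjusting a prefix. The finiteness of $\B$ must be deduced from the finite-defect hypothesis, a non-trivial point since $\uu$ is only assumed recurrent rather than uniformly recurrent. The involutive antimorphism $\Tb$ on $\B^*$ would then be the unique one satisfying $\Ta \circ \varphi = \varphi \circ \Tb$, whose existence relies on the $\Ta$-invariance of $\mathcal{F}$ and the fact that $\Ta$ permutes the corresponding complete return words.

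The main obstacle is proving that $\vv$ is $\Tb$-rich. By construction, $\mathcal{L}(\vv)$ is closed under $\Tb$, so one may invoke the criterion recalled in the excerpt: $\Tb$-richness of $\vv$ amounts to saturating the inequality \eqref{eq:nerovnost} for $\vv$ at every length. The plan is to relate the factor and $\Tb$-palindromic complexities of $\vv$ to the corresponding quantities for $\uu$ via $\varphi$, and then to show that the stabilized finite $\Ta$-defect of $\uu$ forces this saturation. The delicate step is to establish a bijection between $\Tb$-palindromic factors of $\vv$ and those $\Ta$-palindromic factors of $\uu$ that exceed the threshold $N$ and are bordered by elements of $\mathcal{F}$; this correspondence is what makes the $\Tb$-defect of $\vv$ collapse to zero.
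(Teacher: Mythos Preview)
Your plan is essentially the return-word construction that the paper uses for \Cref{thm:2}, and it runs into a genuine obstruction when only recurrence (not uniform recurrence) is assumed. Two steps do not go through. First, the existence of a finite family $\mathcal{F}$ of $\Ta$-palindromes such that \emph{every sufficiently long factor} of $\uu$ meets some $p_i$ is precisely a uniform-recurrence statement; finite $\Ta$-defect alone does not provide it. Second, even granting such a family, the set of return words to $\mathcal{F}$ need not be finite for a merely recurrent word---you flag this yourself, but there is no mechanism in the hypothesis $\DTa(\uu)<+\infty$ that bounds the number of return words.

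The paper's proof avoids both problems by a different desubstitution: it fixes $n>H$ and takes for $\B$ the set of $n$-simple paths in $\uu$ (factors whose only special factors of length $n$ are the prefix and suffix), with $\Tb([b]):=[\Ta(b)]$ and $\vv$ the coding of $\uu$ by consecutive $n$-simple paths. The point is that finite $\Ta$-defect forces the super reduced Rauzy graph $G_n(\uu)$ to be a tree after removing loops (\Cref{theta_graf_rovnost} and \Cref{finite_defect_CuT_properties}). This tree structure immediately yields the alternation of $[b]$ and $\Tb([b])$ in $\vv$---walking in a tree, an edge is traversed alternately in each direction---and the $\Ta$-palindromicity of long ``complete first returns'' in $\uu$ transfers to $\Tb$-palindromicity of the corresponding factors of $\vv$. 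Richness of $\vv$ then follows from the characterization by alternation and $\Theta$-palindromic complete first returns, not from a complexity comparison as you propose; the latter route would require controlling factor and palindromic complexities of $\vv$ in terms of those of $\uu$, which is considerably harder to make precise.
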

A stronger statement can be shown if  the requirement of uniform recurrence is imposed on the word $\uu$.

\begin{thm} \label{thm:2}
Let $\Theta: \A^* \mapsto \A^*$ be an involutive antimorphism.
Let $\uu \in \A^{\N}$ be an infinite uniformly recurrent word such that $\DT(\uu) < +\infty$. Then there exist a
morphism $\varphi: \B^* \mapsto \A^*$ and an infinite uniformly
recurrent  word $\vv \in \B^{\N}$ such that
$$
\uu = \varphi(\vv) \text{ and } \vv \text{ is } \Tr\text{-rich}.
$$
\end{thm}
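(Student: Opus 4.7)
The plan is to derive Theorem~\ref{thm:2} from Theorem~\ref{thm:1} by an additional reduction that exploits uniform recurrence to absorb any non-trivial letter permutation hidden in the target antimorphism. Applying Theorem~\ref{thm:1} to $\uu$ (with its antimorphism $\Theta$ playing the role of $\Ta$) yields an involutive antimorphism $\Tb : \B^* \to \B^*$, a morphism $\varphi_1 : \B^* \to \A^*$, and a recurrent $\Tb$-rich word $\vv_1 \in \B^{\N}$ with $\uu = \varphi_1(\vv_1)$. After discarding letters of $\B$ that do not occur in $\vv_1$, we may assume $\varphi_1$ is non-erasing; uniform recurrence then transfers from $\uu$ to $\vv_1$ by a direct counting argument comparing occurrences of a factor $w$ in $\vv_1$ with those of $\varphi_1(w)$ in $\uu$, using the bounds $\min_a|\varphi_1(a)|$ and $\max_a|\varphi_1(a)|$ on block lengths.

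Next, decompose $\Tb = \Tr \circ \pi$, where $\pi(a) := \Tb(a)$ is an involutive permutation of $\B$. If $\pi = \mathrm{Id}$, the conclusion is immediate with $\varphi = \varphi_1$ and $\vv = \vv_1$. Otherwise, the idea is to recode $\vv_1$ by return words to a sufficiently long $\Tb$-palindromic bispecial factor $p$, chosen so that the non-trivial action of $\pi$ is fully absorbed inside each return block. Such $p$ of arbitrary length exist because $\vv_1$ is $\Tb$-rich (hence has large $\Tb$-palindromic complexity) and uniformly recurrent. Let $\Ret = \{r_1,\ldots,r_k\}$ be the finite set of return words to $p$ in $\vv_1$, and let $\psi : \C^* \to \B^*$ be the coding morphism $\psi(c_i) = r_i$. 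The derived word $\vv_2 \in \C^{\N}$, defined up to a finite prefix that we remove by aligning the first occurrence of $p$ with the start of $\uu$, is uniformly recurrent by Durand's theorem and satisfies $\vv_1 = \psi(\vv_2)$.

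The main obstacle is to verify that $\vv_2$ is $\Tr$-rich. I would establish a bijection between classical palindromic factors of $\vv_2$ and $\Tb$-palindromic factors of $\vv_1$ of length exceeding $|p|$, via a map of the form $q \mapsto p\,\psi(q)$ (or a symmetric variant dictated by the return-word convention). The crucial point is that, since $p$ itself is a $\Tb$-palindrome, applying $\Tb = \Tr \circ \pi$ to a concatenation of return blocks amounts, at the $\C$-level, to classical reversal: the permutation $\pi$ is precisely what realigns adjacent return blocks after the mirror operation swaps them. Combined with the characterization $\TuT(n)=0$ of $\Tb$-richness recalled in \eqref{eq:nerovnost}, this correspondence forces the analogous gap between factor complexity and $\Tr$-palindromic complexity of $\vv_2$ to vanish, so $\vv_2$ is $\Tr$-rich. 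Setting $\varphi = \varphi_1 \circ \psi$ and $\vv = \vv_2$ then completes the proof.
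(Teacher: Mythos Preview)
Your core idea---recoding by return words to a long $\Theta$-palindromic prefix so that the antimorphism collapses to classical reversal at the level of the new alphabet---is exactly the mechanism the paper uses, but the paper applies it \emph{directly} to $\uu$ rather than first passing through Theorem~\ref{thm:1}. Concretely, the paper picks a $\Theta$-palindromic prefix $p$ of $\uu$ with $|p|>K$ (its existence following from Proposition~\ref{finite_defect_CuT_properties}), lets $q^{(1)},\ldots,q^{(M)}$ be the return words of $p$, sets $\varphi(i)=q^{(i)}$, and proves the identity $\Theta(\varphi(w)p)=\varphi(\Tr(w))p$ from the single relation $p\,\Theta(q^{(i)})=q^{(i)}p$. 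This identity is precisely the ``absorption of $\pi$ into the return blocks'' you allude to, but here it is stated and proved in one line. Your preliminary invocation of Theorem~\ref{thm:1} is therefore a detour that buys nothing.

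There are two genuine gaps in your sketch. First, you take $p$ to be an arbitrary $\Tb$-palindromic bispecial factor of $\vv_1$, not a prefix; you then speak of ``removing a finite prefix'' and ``aligning with the start of $\uu$'', but if $p$ is not a prefix of $\vv_1$ then $\psi(\vv_2)$ is only a proper suffix of $\vv_1$ and $\varphi_1\circ\psi(\vv_2)\neq\uu$. The paper avoids this by first arguing that $\uu$ has arbitrarily long $\Theta$-palindromic \emph{prefixes}. Second, your proposed route to $\Tr$-richness of $\vv_2$ via a palindrome bijection and the identity $\TuT(n)=0$ is incomplete: to transfer the complexity identity you would also need a controlled correspondence between \emph{all} factors of $\vv_2$ and long factors of $\vv_1$, with matching length shifts, and you do not supply this. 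The paper instead proves richness of $\vv$ by the return-word criterion (Theorem~2.14 of \cite{GlJuWiZa}): if $s=\Tr(s)$ in $\vv$ and $w$ is a complete return word of $s$, then $\varphi(w)p$ is a complete return word of the $\Theta$-palindrome $\varphi(s)p$, hence a $\Theta$-palindrome by Proposition~\ref{finite_defect_CuT_characterizations}, and the identity above together with injectivity of $\varphi$ forces $w=\Tr(w)$. This argument is both shorter and airtight.
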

One can conclude that rich words, using the classical
notion of palindrome, play somewhat more important role that
$\Theta$-rich words for an arbitrary $\Theta \neq \Tr$.

Proofs of the two stated theorems do not provide any relation
between the size of the alphabet $\B$ of the word $\vv$ and the
size of the original alphabet $\A$.  In the following special
case,  the size of $\B$ can be bounded.
Moreover, the word $\vv$ is more specific.

\begin{thm} \label{thm:3}
Let $\Theta: \A^* \mapsto \A^*$ be an involutive antimorphism
%, $\Theta \neq \Tr$,
 and $\uu \in \A^{\N}$
be a $\Theta$-standard word with seed.
Then there exist an Arnoux-Rauzy word $\vv \in \B^{\N}$
and a morphism $\varphi: \B^* \mapsto \A^*$
such that
$$
\uu = \varphi(\vv) \text{ and } \# \B \leq \# \A.
$$
\end{thm}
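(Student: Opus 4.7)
The plan is to simulate the iterated $\Theta$-palindromic closure defining $\uu$ by an iterated classical palindromic closure on a suitable alphabet $\B$, transported back to $\A$ via a morphism. Write $\uu = \lim_n w_n$ with $w_0$ the seed and $w_{n+1} = (w_n a_{n+1})^{+\Theta}$, where $\Delta = a_1 a_2 \cdots \in \A^{\N}$ is the directive sequence and $^{+\Theta}$ denotes right $\Theta$-palindromic closure. I will build an Arnoux-Rauzy word $\vv = \lim_n v_n$ on an alphabet $\B$ with $\#\B \le \#\A$, given by $v_{n+1} = (v_n b_{n+1})^{+}$ with the classical right palindromic closure, together with a morphism $\varphi: \B^* \to \A^*$ so that $\varphi(v_n) = w_n$ for every $n$, and hence $\varphi(\vv) = \uu$.

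The alphabet $\B$ is obtained by collapsing each $\Theta$-orbit $\{a,\Theta(a)\}$ of letters appearing in $\Delta$ to a single symbol, yielding $\#\B \le \#\A$. Fixing a representative $a$ of each class $b \in \B$, I define $\varphi(b) = a\Theta(a)$, which is a $\Theta$-palindrome in $\A^*$ of length one or two, and let the directive of $\vv$ be the projection $b_n = \pi(a_n)$ of $\Delta$. The key technical claim is a commutation identity
$$
\varphi\bigl((vb)^{+}\bigr) \;=\; \bigl(\varphi(vb)\bigr)^{+\Theta}
$$
for every prefix $v$ of $\vv$ and every $b \in \B$. Its proof amounts to showing that the longest palindromic suffix of $vb$ corresponds under $\varphi$ to the longest $\Theta$-palindromic suffix of $\varphi(vb)$, and that the reflection step in each closure commutes with $\varphi$. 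Both use that every $\varphi(b)$ is itself a $\Theta$-palindrome and that $\Theta\circ\varphi = \varphi\circ \Tr$ as maps $\B^* \to \A^*$, where $\Tr$ is the reversal on $\B^*$. An induction on $n$ then propagates the identity and yields $\varphi(v_n) = w_n$.

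The main obstacle is the base case: producing $v_0 \in \B^*$ with $\varphi(v_0) = w_0$, and more generally handling a non-trivial seed. When $w_0 = \varepsilon$ one takes $v_0 = \varepsilon$; otherwise the $\Theta$-palindromic structure of $w_0$ guaranteed by its validity as a seed in the sense of \cite{BuLuLuZa2} must be used to pick an appropriate $v_0$. In degenerate situations, for example when $\uu$ turns out to be eventually periodic, one may need to redefine $\varphi$ on certain letters so that they carry longer $\Theta$-palindromic fragments encoding the period, possibly collapsing $\B$ to a single letter; the bound $\#\B \le \#\A$ is preserved throughout. The final verification that $\vv$ is Arnoux-Rauzy reduces to showing that every letter of $\B$ appears infinitely often in the projected directive, which is a direct consequence of the standard-with-seed hypothesis on $\uu$.
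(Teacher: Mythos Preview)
Your approach has a genuine gap: the commutation identity $\varphi\bigl((vb)^{+}\bigr) = \bigl(\varphi(vb)\bigr)^{+\Theta}$ is false in general, because collapsing each orbit $\{a,\Theta(a)\}$ to a single letter of $\B$ discards information that the $\Theta$-palindromic closure needs. Take $\A=\{a,b\}$ with $\Theta$ exchanging $a$ and $b$, empty seed, and directive $\Delta = a\,a\,b\cdots$. Then $w_1=ab$, $w_2=abab$, and $w_3=(abab\cdot b)^{+\Theta}=ababb\cdot\Theta(ababb)=ababbaabab$, since the only $\Theta$-palindromic suffix of $ababb$ is $\varepsilon$. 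On your side $\B=\{c\}$ with $\varphi(c)=ab$; the projected directive is $c\,c\,c\cdots$, hence $v_3=ccc$ and $\varphi(v_3)=ababab\neq w_3$. The two letters $a$ and $b$ project to the same letter of $\B$, yet appending $a$ versus $b$ to $w_2$ yields different $\Theta$-closures; your induction step implicitly assumes these coincide. More structurally, the image of your $\varphi$ consists only of concatenations of the fixed blocks $a\Theta(a)$, and a $\Theta$-standard word with seed has no reason to lie in that image. The seed issue you flag is a second, independent gap: an arbitrary seed $w_0$ need not be $\varphi(v_0)$ for any $v_0$, and ``redefining $\varphi$ on certain letters'' destroys the intertwining $\Theta\circ\varphi=\varphi\circ\Theta_0$ on which your whole argument rests.

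The paper proceeds completely differently and avoids these obstructions: it never tries to mirror the iterated closure. Using Corollary~\ref{co:std_seed_are_rich} and the fact from \cite{BuLuLuZa2} that every sufficiently long left special factor of $\uu$ is a prefix, one chooses a long bispecial $\Theta$-palindromic prefix $p$, lets $\B=\{1,\ldots,M\}$ index the return words $q^{(1)},\ldots,q^{(M)}$ of $p$ (there are exactly $M\le\#\A$ of them because $p$ is the unique left special factor of its length), sets $\varphi(i)=q^{(i)}$, and takes $\vv$ to be the derived sequence of $\uu$ over these return words. Richness of $\vv$ then follows exactly as in the proof of Theorem~\ref{thm:2}, and the uniqueness of long left special factors in $\uu$ transfers via injectivity of $\varphi$ to $\vv$, which makes $\vv$ Arnoux-Rauzy.
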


All three mentioned theorems present almost $\Theta_1$-rich word
as an image of a $\Theta_2$-rich word by a suitable morphism. The
opposite question when a morphic image of a $\Theta_1$-rich word
is almost $\Theta_2$-rich is not tackled here. In \cite{GlJuWiZa},
a type of morphisms preserving the set of almost $\Theta_0$-rich
words is studied.

\section{Properties of words with finite $\Theta$-defect}

We will consider mainly infinite words $\uu  = (u_n)_{n \in \N}\in \A^{\N}$ having their language $\Lu$ closed under a
given involutive antimorphism $\Theta$. In other words, for any
factor $w \in \Lu$ we have $\Theta(w) \in \Lu$.

For any factor $w\in \mathcal{L}({\mathbf u})$ there exists an
index $i$ such that $w$ is a prefix  of  the infinite word
$u_iu_{i+1}u_{i+2} \ldots$. Such an index is called an {\em
occurrence} of $w$ in ${\mathbf u}$. If each factor of $\mathbf u$
has infinitely many occurrences in ${\mathbf u}$, the infinite
word $\mathbf u$ is said to be {\em recurrent}. It is easy to see
that if the language of ${\mathbf u}$ is closed under $\Theta$,
then ${\mathbf u}$ is recurrent. For a~recurrent infinite word
${\mathbf u}$, we may define the notion of a~{\em complete return
word} of any $w \in\mathcal{L}({\mathbf u})$. It is a~factor $v\in
\mathcal{L}({\mathbf u})$ such that $w$ is a prefix and a suffix
of $v$ and $w$ occurs in $v$ exactly twice. Under a~{\em return
word} of a factor $w$ we usually mean a word $q \in
\mathcal{L}({\mathbf u})$  such that $qw$ is a complete return
word of $w$. If any factor $w \in \mathcal{L}({\mathbf u})$ has
only finitely many return words, then the infinite word ${\mathbf
u}$ is called {\em uniformly recurrent}.

An important role for the description of languages closed under
$\Theta$ is played by the so-called super reduced Rauzy graphs
$G_n({\mathbf u})$. Before defining them,
we will introduce some necessary notions.

We say that a factor $w\in \Lu$ is left special (LS) if $w$ has at
least two left extensions, i.e., if  there exist two letters $a,b
\in \A$, $a \neq b$, such that $aw, bw \in \Lu$. A right special
(RS) factor is defined analogously. If a factor is LS and RS, we
refer to it as bispecial. The closedness under $\Theta$ assures
the following relation:  a factor $w$ is LS if and only if the
factor $\Theta(w)$ is RS.

An {\em $n$-simple path} $e$ is a~factor of ${\mathbf u}$ of
length at least $n + 1$ such that the only special (right or left)
factors of length $n$ occurring in $e$ are its prefix and suffix
of length $n$. If $w$ is the prefix of $e$ of length $n$ and $v$
is the suffix of $e$ of length $n$, we say that the $n$-simple
path $e$ begins with $w$ and ends with $v$. We will denote by
$G_n({\mathbf u})$ an undirected graph whose set of vertices is
formed by unordered pairs $(w,\Theta(w))$ such that $w \in
\mathcal{L}({\mathbf u})$, $|w| = n$, is RS or LS. We connect
two vertices $(w,\Theta(w))$ and $(v,\Theta(v))$ by an unordered
pair $(e,\Theta(e))$ if $e$ or $\Theta(e)$ is an $n$-simple path
beginning with $w$ or $\Theta(w)$ and ending with $v$ or $\Theta(v)$.
Note that the graph $G_n({\mathbf u})$ may have multiple edges and
loops.

Surprisingly, the super reduced Rauzy graph  $G_n(\uu)$ can be used to
detect the equality in \eqref{eq:nerovnost}. Let us cite Corollary
7 from \cite{Sta2010}.
\begin{prop} \label{theta_graf_rovnost}
Let $n \in \N$ and $\Lu$ be closed under $\Theta$.
Then $\TuT(n) = 0$ if and only if
\begin{enumerate}
  \item all $n$-simple paths forming a loop in $G_n(\uu)$ are $\Theta$-palindromes and
  \item $G_n(\uu)$  after removing loops is a tree.
\end{enumerate}
\end{prop}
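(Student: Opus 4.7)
The plan is to translate the inequality \eqref{eq:nerovnost} into an Euler-type formula for the super reduced Rauzy graph $G_n(\uu)$ and then read the equality case off from it. First, I would count $n$-simple paths by summing the right-extension numbers $r(w)$ over all factors $w$ of length $n$: non-special factors contribute $\C(n) - s(n)$ (where $s(n)$ denotes the number of special factors of length $n$), so the total number of $n$-simple paths equals $\C(n+1) - \C(n) + s(n)$. Since $\Lu$ is closed under $\Theta$, both the special factors and the $n$-simple paths split into $\Theta$-orbits of size one or two, and taking orbits gives
\[
V_n = \tfrac12\bigl(s(n)+p_v\bigr),\qquad E_n = \tfrac12\bigl(\C(n+1)-\C(n)+s(n)+p_e\bigr),
\]
where $p_v$ and $p_e$ count the $\Theta$-fixed special factors of length $n$ and the $\Theta$-fixed $n$-simple paths respectively.

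Next, I would match $p_v$ and $p_e$ with $\PT(n)$ and $\PT(n+1)$, up to the trivial contributions of the empty $\Theta$-palindrome and length-one fixed letters that together account for the constant $2$ in \eqref{eq:nerovnost}. The key observation is that a $\Theta$-palindrome $p$ of length $n$ is either bispecial (contributing to $p_v$), or both its unique right extension $pa$ and unique left extension $\Theta(a)p$ are forced by $\Theta$-invariance; in the latter case $\Theta(a)pa$ is again a $\Theta$-palindrome, so iterating, $p$ lies in the interior of a uniquely determined maximal $\Theta$-palindromic factor whose endpoints are bispecial---that is, a $\Theta$-fixed $n$-simple path. Turning this correspondence into the required numerical identity simultaneously at lengths $n$ and $n+1$ is the step I expect to be the main technical obstacle, since one must keep precise track of how each $\Theta$-fixed simple path contributes one extra palindrome of length $n$ and one of length $n+1$ at each inward step.

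Finally, I would invoke the elementary bound $E_n - L_n \geq V_n - C_n$ applied to $G_n(\uu)$ after removing its $L_n$ loops, with equality iff the loop-free graph is a disjoint union of trees. Closedness of $\Lu$ under $\Theta$ makes $\uu$ recurrent and hence $G_n(\uu)$ connected, so $C_n = 1$. Substituting the formulas of the previous two steps rewrites this bound as exactly \eqref{eq:nerovnost}; the equality case then requires simultaneously that the loop-free graph be a tree (condition 2) and that the total number of loops equal $p_e$, i.e.~every loop corresponds to a $\Theta$-fixed simple path (condition 1), giving the claimed equivalence.
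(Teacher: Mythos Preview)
The paper does not actually prove this proposition: it is quoted as Corollary~7 of \cite{Sta2010} (see the sentence immediately preceding the statement), so there is no proof in the present paper to compare your attempt against.

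That said, your outline is essentially the argument behind that corollary (and, for $\Theta=\Tr$, already behind \cite{BaMaPe}): express $V_n$ and $E_n$ via orbit-counting, identify $p_v+p_e$ with $\PT(n)+\PT(n+1)$, and read off the equality case from Euler's inequality $E_n-L_n\ge V_n-1$ for the connected graph $G_n(\uu)$, combined with the observation that every $\Theta$-fixed $n$-simple path is automatically a loop, so that $L_n\ge p_e$ with equality precisely under condition~1. Two small corrections to your sketch are worth noting. First, the constant~$2$ in \eqref{eq:nerovnost} does not come from ``the empty $\Theta$-palindrome and length-one fixed letters''; for $n\ge 1$ those do not appear in $\PT(n)+\PT(n+1)$ at all, and the~$2$ is simply twice the $-1$ from Euler's formula after clearing the factor $\tfrac12$. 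Second, a $\Theta$-fixed $n$-simple path of length $m$ carries exactly one central $\Theta$-palindromic factor of length $n$ \emph{or} $n+1$, according to the parity of $m-n$, not one of each; this parity split is what makes the bijection between $p_e$ and the set of non-bispecial $\Theta$-palindromes of length $n$ together with all $\Theta$-palindromes of length $n+1$ work, yielding exactly $p_v+p_e=\PT(n)+\PT(n+1)$. With those two fixes your plan goes through.
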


Analogously to the case of the reversal mapping, one can see from the definition of $\Theta$-defect that an infinite word $\uu$ has finite $\Theta$-defect
if and only if there exists an integer $H$ such that 
of every prefix $p$ of $\uu$ of length greater than $H$ has a unioccurrent $\Theta$-palindromic suffix, i.e., a suffix occurring exactly once in $p$.
We will use this fact to prove the following lemma.

\begin{lem} \label{lem:R_a_kon_def_je_CuT}
Let $\uu$ be a recurrent infinite word with finite $\Theta$-defect.
Then $\Lu$ is closed under $\Theta$.
\end{lem}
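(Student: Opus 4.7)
My plan is to use the characterization recalled just above the lemma: since $\DT(\uu)<+\infty$, there exists an integer $H$ such that every prefix $p_n:=u_0u_1\cdots u_{n-1}$ with $n>H$ admits a unioccurrent $\Theta$-palindromic suffix. The first step is to upgrade this to: the \emph{longest} $\Theta$-palindromic suffix $s_n$ of $p_n$ is itself unioccurrent in $p_n$. Indeed, any two $\Theta$-palindromic suffixes of $p_n$ are nested, so if $t$ is a unioccurrent $\Theta$-palindromic suffix of $p_n$, then $t$ is a suffix of $s_n$; a second occurrence of $s_n$ would then produce a second occurrence of $t$, a contradiction.

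The second step is to show the growth statement $|s_n|\to\infty$. If $|s_n|$ were bounded, only finitely many $\Theta$-palindromes could appear as some $s_n$, so some fixed $\Theta$-palindrome $s$ would equal $s_n$ for infinitely many $n$. Picking two such indices $n_1<n_2$, the palindrome $s$ would appear in $p_{n_2}$ both at position $n_2-|s|$ (as the suffix $s_{n_2}$) and at position $n_1-|s|$ (since $s=s_{n_1}$ is a suffix of $p_{n_1}$, hence of $p_{n_2}$), contradicting the unioccurrence of $s_{n_2}=s$ in $p_{n_2}$.

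With these two facts in hand, I would conclude by combining them with the recurrence of $\uu$. Given $w\in\Lu$, recurrence provides infinitely many occurrences $i_1<i_2<\cdots$ of $w$ in $\uu$. Setting $n_k:=i_k+|w|$, the prefix $p_{n_k}$ ends with $w$. Choosing $k$ large enough so that $n_k>H$ and $|s_{n_k}|\geq|w|$, both $w$ and $s_{n_k}$ are suffixes of $p_{n_k}$ with $|w|\leq|s_{n_k}|$, so $w$ is a suffix of $s_{n_k}$. Applying $\Theta$ and using $\Theta(s_{n_k})=s_{n_k}$, the word $\Theta(w)$ is a prefix of $s_{n_k}$, hence a factor of $\uu$. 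This yields $\Theta(w)\in\Lu$, so $\Lu$ is closed under $\Theta$.

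The only slightly delicate point is the growth claim $|s_n|\to\infty$; once it is in place, the rest is a direct length comparison performed at a prefix whose right endpoint is chosen to match an occurrence of $w$ far enough to the right.
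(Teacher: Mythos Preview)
Your argument is correct. The core mechanism---unioccurrent $\Theta$-palindromic suffixes of long prefixes, recurrence to place $w$ at the right end of such a prefix, and a length comparison forcing $w$ to sit inside a $\Theta$-palindrome---is exactly what the paper uses. The paper, however, is more economical: it skips both your Step~1 and your Step~2. It argues by contradiction, picks two occurrences $i<j$ of $w$ with $i,j>H$, and considers any unioccurrent $\Theta$-palindromic suffix $s$ of the prefix $p$ of length $j+|w|$. If $|s|\le |w|$ then $s$ is a suffix of $w$, hence appears in both copies of $w$ inside $p$, contradicting unioccurrence; so $|s|>|w|$, and then $w$ is a suffix of the $\Theta$-palindrome $s$, whence $\Theta(w)\in\Lu$. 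In other words, the paper uses the second occurrence of $w$ itself to force the palindromic suffix to be long enough, rather than proving the general growth statement $|s_n|\to\infty$. Your route yields a pleasant auxiliary fact (and the upgrade to the \emph{longest} palindromic suffix is harmless but unnecessary), while the paper's route reaches the goal in one stroke. One small wording point in your Step~2: the negation of $|s_n|\to\infty$ is that some subsequence stays bounded, not that the full sequence is bounded; your pigeonhole argument goes through verbatim on that subsequence.
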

\begin{proof}
Let $H$ be an integer such that every prefix of $\uu$ of length greater
 than $H$ has a unioccurrent $\Theta$-palindromic suffix.
Suppose that $w$ is a factor of $\uu$ such that $\Theta(w) \not
\in \Lu$. Since $\uu$ is recurrent, we can find two
consecutive occurrences $i$ and $j$ of the factor $w$ such that
$i, j > H$ and $i < j$. Denote $p$ the prefix of $\uu$ ending with
$w$ occurring at $j$, i.e., $|p| = j + |w|$. Since $|p|
> H$, there exists a unioccurrent $\Theta$-palindromic suffix of
$p$. Denote $s$ to be such a suffix. If $|s| \leq |w|$, then $s$
is a factor of $w$ and thus occurs at least twice in $p$
- a contradiction with the unioccurrence of $s$. If $|s| > |w|$,
the $w$ is a factor of $s$ which is a $\Theta$-palindrome and thus
contains $\Theta(w)$ as well - a contradiction with the
assumption that $\Theta(w) \not \in \Lu$.
\end{proof}

In \cite{BaPeSta3},  various properties are shown for words with
finite $\Tr$-palindromic defect.  These properties and their
proofs are valid  even if we replace the antimorphism
$\Theta_0$ by an arbitrary $\Theta$. Therefore, we mention here the
relevant statements without proving them.

\begin{prop} \label{finite_defect_CuT_properties}
Let $\uu$ be an infinite recurrent word such that $\DT(\uu) < +\infty$. Then there exists a positive  integer
$H$ such that
\begin{itemize}
  \item every prefix of $\uu$ longer than $H$ has a unioccurent $\Theta$-palindromic suffix;
  \item for any factor $ w \in \Lu$ such that $|w| > H$, occurrences of $w$ and $\Theta(w)$ in the word  $\uu$ alternate;
  \item for  any $w \in \Lu$ such that $|w| > H$,  every factor $v \in \Lu$ beginning with $w$, ending with $\Theta(w)$, and with no other occurrences of $w$ or $\Theta(w)$ is a
  $\Theta$-palindrome;
  \item  $\TuT(n) = 0$ for any integer $n > H$.
\end{itemize}
\end{prop}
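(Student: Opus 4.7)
The plan is to derive the four bullets sequentially, each building on the preceding ones.

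First, I would establish bullet 1 directly from the definition via a prefix-increment analysis analogous to the classical case in \cite{BrHaNiRe}. Setting $d(p) := |p| + 1 - \gT(p) - \#\PalT(p)$ for every prefix $p$ of $\uu$, a letter-by-letter inspection shows $d(pa) - d(p) \in \{0,1\}$, with value $0$ precisely when $pa$ admits a $\Theta$-palindromic suffix that does not occur in $p$, i.e., a unioccurrent $\Theta$-palindromic suffix of $pa$. Since $\DT(\uu) = \sup_p d(p)$ is finite, the nondecreasing sequence $d(p)$ stabilizes past some prefix length $H$, so every prefix beyond $H$ must possess a unioccurrent $\Theta$-palindromic suffix.

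Next, for bullets 2 and 3 I would combine bullet 1 with recurrence in the spirit of \cref{lem:R_a_kon_def_je_CuT}, enlarging $H$ if needed. For alternation, suppose two consecutive occurrences of $w$ (the $\Theta(w)$ case being symmetric) appear at positions $i<j$ with no $\Theta(w)$ between them; look at the prefix $p$ of $\uu$ ending at position $j+|w|$ and let $s$ be its unioccurrent $\Theta$-palindromic suffix. If $|s|\le |w|$, then $s$ is also a suffix of the earlier copy of $w$, contradicting unioccurrence. If $|s|>|w|$, then $s$ ends with $w$ and, being a $\Theta$-palindrome, starts with $\Theta(w)$; choosing $H$ larger than a uniform bound on return lengths of factors of length comparable to $|w|$ places this occurrence of $\Theta(w)$ strictly between $i$ and $j$, again a contradiction. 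Bullet 3 uses the same mechanism: given the $v$ of the statement, take the prefix of $\uu$ ending at the prescribed occurrence of $\Theta(w)$ at the end of $v$ and examine its unioccurrent $\Theta$-palindromic suffix $s$; length comparisons rule out $|s|<|v|$ (via the alternation just proved) and $|s|>|v|$ (which would force extra copies of $w$ or $\Theta(w)$ inside $s$), leaving $v=s$ and hence a $\Theta$-palindrome.

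Finally, for bullet 4 I would apply \cref{theta_graf_rovnost}. For $n>H$ it suffices to verify that every $n$-simple path forming a loop in $G_n(\uu)$ is a $\Theta$-palindrome and that $G_n(\uu)$ with its loops removed is a tree. The loop condition is immediate from bullet 3, since an $n$-simple path forming a loop at the vertex $(w,\Theta(w))$ is, up to $\Theta$, a factor starting and ending with $w$ or $\Theta(w)$ with no intermediate occurrences of either. The tree property requires a more delicate counting argument using bullet 2: any hypothetical cycle in $G_n(\uu)$ minus its loops would supply a factor violating the alternation property, while $\Theta$-symmetry of the edge set together with connectedness inherited from the ordinary Rauzy graph finishes the claim.

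The step I expect to be the main obstacle is the tree property in bullet 4, since it converts the purely local alternation of $w$ and $\Theta(w)$ into a global acyclicity statement about the super reduced Rauzy graph; the matching of edge orbits under $\Theta$ is the least routine piece. Bullet 3 is technically the most intricate through its case analysis, but once bullets 1--3 are in hand, the remaining verification in bullet 4 reduces to careful bookkeeping around the unioccurrent-$\Theta$-palindromic-suffix characterization of finite defect.
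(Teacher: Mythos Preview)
The paper itself does not prove this proposition: immediately before stating it, the authors write that the proofs from \cite{BaPeSta3} for the reversal mapping $\Theta_0$ remain valid verbatim when $\Theta_0$ is replaced by an arbitrary involutive antimorphism $\Theta$, and they explicitly omit the argument. So there is no in-paper proof to compare your sketch against beyond that citation, and your outline is already more detailed than what the paper provides.

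That said, there is a genuine gap in your treatment of the alternation property (bullet~2). You dispose of the case $|s|>|w|$ by ``choosing $H$ larger than a uniform bound on return lengths of factors of length comparable to $|w|$'' so that the $\Theta(w)$ at the start of $s$ lands strictly between $i$ and $j$. This does not work: $\uu$ is only assumed recurrent, not uniformly recurrent, so return lengths need not be bounded, and in any case $H$ must be a single constant while $|w|$ ranges over all integers exceeding $H$, making the dependency circular. The situation you have not addressed is $|s|>j-i+|w|$, where the unioccurrent $\Theta$-palindromic suffix $s$ extends to the left of the earlier occurrence of $w$; here one typically exploits the $\Theta$-palindromic structure of $s$ to reflect the copy of $w$ sitting at position $i$ into an occurrence of $\Theta(w)$ strictly between $i$ and $j$, or one restricts attention to the earliest pair of occurrences violating alternation. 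Your sketches for bullets~1, 3 and~4 are sound in outline and agree with the approach of \cite{BaPeSta3}; as you correctly anticipate, the acyclicity of $G_n(\uu)$ after removing loops in bullet~4 is the step requiring the most bookkeeping.
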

As already mentioned,  the first property
listed in the previous proposition, in fact, characterizes
words with finite $\Theta$-defect. We do not know whether this
is the case of the remaining properties. If we restrict our
attention to uniformly recurrent words, only then several
characterizations of words with finite $\Theta$-defect can be
shown. The next proposition states two of them that we will use in
what follows. Again, the proposition  is based on the work done in
\cite{BaPeSta3} for $\Theta = \Tr$. No modifications besides
replacing $\Theta_0$ by $\Theta$ in its proof are needed,
therefore, we will omit it.

\begin{prop} \label{finite_defect_CuT_characterizations}
Let $\uu$ be a uniformly recurrent infinite word  with language
closed under $\Theta$. The following statements are equivalent.
\begin{itemize}
  \item $\DT(\uu) < + \infty$;
  \item \label{finite_defect_CuT_characterizations_2} there exists a positive integer  $ K$ such that for  any  $\Theta$-palindrome $w \in \Lu$
  of length  $|w| \geq K$,   all complete return words of $w$ are
  $\Theta$-palindromes;
  \item there exists a positive integer  $ H$ such that for any $w \in \Lu$, the longest $\Theta$-palindromic suffix of $w$  is unioccurrent in $w$.
\end{itemize}
\end{prop}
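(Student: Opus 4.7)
The plan is to prove the chain (i)~$\Rightarrow$~(ii)~$\Rightarrow$~(iii)~$\Rightarrow$~(i). The implication (i)~$\Rightarrow$~(ii) is immediate from Proposition~\ref{finite_defect_CuT_properties}: set $K := H+1$, where $H$ is the constant provided by that proposition. If $w \in \Lu$ is a $\Theta$-palindrome with $|w| \geq K$, then $\Theta(w) = w$, so a complete return word of $w$ is precisely a factor beginning with $w$, ending with $\Theta(w)$, and containing no other occurrence of $w$ or $\Theta(w)$; the third bullet of Proposition~\ref{finite_defect_CuT_properties} then declares such a factor to be a $\Theta$-palindrome, establishing (ii).

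For (iii)~$\Rightarrow$~(i), I will exploit the incremental behaviour of $\DT$ on prefixes. Write $p_n$ for the prefix of $\uu$ of length $n$. A letter-by-letter analysis of the definition $\DT(p) = |p| + 1 - \gT(p) - \#\PalT(p)$ shows that passing from $p_n$ to $p_{n+1}$ increases each of $\gT$ and $\#\PalT$ by $0$ or $1$, with joint increase bounded by $1$; hence $\DT(p_{n+1}) - \DT(p_n) \in \{0,1\}$, and the value $0$ is attained precisely when either a new pair $\{a,\Theta(a)\}$ is introduced or the longest $\Theta$-palindromic suffix of $p_{n+1}$ is unioccurrent in $p_{n+1}$. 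Since the alphabet is finite, only finitely many steps can add a new pair, and condition (iii) ensures the unioccurrence for every $n > H$, so $\DT(p_n)$ stabilizes. Because the same increment argument, applied at either end, shows $\DT$ is monotone under extension, $\DT(\uu) = \sup_n \DT(p_n) < +\infty$.

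The substantive implication is (ii)~$\Rightarrow$~(iii), which I split in two. First, assuming (ii) with constant $K$, I claim that whenever $w \in \Lu$ has longest $\Theta$-palindromic suffix $s$ with $|s| \geq K$, $s$ is unioccurrent in $w$. Otherwise, the shortest suffix $r$ of $w$ that both begins and ends with $s$ is a complete return word of $s$; by (ii), $r$ is a $\Theta$-palindrome, making $r$ a $\Theta$-palindromic suffix of $w$ with $|r| > |s|$, and contradicting the maximality of $s$.

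The main obstacle is the second half: to show under uniform recurrence that the longest $\Theta$-palindromic suffix of every factor of length exceeding some $H_0$ has length at least $K$. Iterating (ii) yields a nested sequence of $\Theta$-palindromes in $\Lu$ of unboundedly growing lengths, since each complete return word of a $\Theta$-palindrome of length $\geq K$ is a strictly longer $\Theta$-palindrome that again has a $\Theta$-palindrome of length $\geq K$ as prefix and suffix, so (ii) applies anew. Uniform recurrence bounds the return times of these long palindromes in $\uu$, and a combinatorial argument—identical to the one carried out for $\Theta = \Tr$ in~\cite{BaPeSta3}—then forces any sufficiently long factor of $\uu$ to end with a $\Theta$-palindrome of length at least $K$. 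Setting $H := \max(H_0, K)$ closes the proof of (iii).
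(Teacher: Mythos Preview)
Your proof is correct and aligned with the paper's treatment: the paper actually omits the proof entirely, stating that the argument from \cite{BaPeSta3} for $\Theta=\Tr$ carries over verbatim upon replacing $\Tr$ by $\Theta$. Your sketch fills in the easy implications (i)~$\Rightarrow$~(ii) and (iii)~$\Rightarrow$~(i) explicitly and, for the substantive step (ii)~$\Rightarrow$~(iii), likewise defers the core combinatorial argument to \cite{BaPeSta3}, so the two are in full agreement.
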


A $\Theta$-standard word   with seed is an infinite word defined
by using $\Theta$-palindromic closure, for details see
\cite{BuLuLuZa2}. Construction of such word $\uu$ guarantees  that
$\uu$ is uniformly recurrent (cf. Proposition 3.5. in \cite{BuLuLuZa2}).  The authors of \cite{BuLuLuZa2}
showed (Proposition 4.8) that any complete return word of a
sufficiently long  $\Theta$-palindromic factor is a
$\Theta$-palindrome as well. Therefore, $\Theta$-standard words
with seed serve as an example of almost $\Theta$-rich words.

\begin{coro} \label{co:std_seed_are_rich}
Let $\uu$ be a $\Theta$-standard word with seed.
Then $\DT(\uu) < +\infty$.
\end{coro}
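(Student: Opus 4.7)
The plan is to read off this corollary directly from the characterization of finite $\Theta$-defect in \cref{finite_defect_CuT_characterizations}, since essentially all the ingredients have been recalled immediately above the statement.

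First I would verify the two structural hypotheses needed to apply \cref{finite_defect_CuT_characterizations}: namely, that $\uu$ is uniformly recurrent and that $\Lu$ is closed under $\Theta$. Both are standard features of $\Theta$-standard words with seed. The uniform recurrence is exactly Proposition~3.5 of \cite{BuLuLuZa2}, which the excerpt has already cited. Closedness of the language under $\Theta$ follows from the very construction: $\uu$ is obtained as a limit of iterated $\Theta$-palindromic closures, so every sufficiently long prefix of $\uu$ is itself a $\Theta$-palindrome, which forces $\Lu$ to be stable under $\Theta$.

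Next I would invoke the second (bulleted) characterization in \cref{finite_defect_CuT_characterizations}: a uniformly recurrent word with language closed under $\Theta$ has $\DT(\uu) < +\infty$ if and only if there exists a constant $K$ such that every $\Theta$-palindromic factor $w \in \Lu$ with $|w| \geq K$ has all its complete return words being $\Theta$-palindromes. This is precisely the content of Proposition~4.8 of \cite{BuLuLuZa2}, which is quoted in the paragraph just before the corollary. Combining these two facts yields $\DT(\uu) < +\infty$ in one line.

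So the proof is essentially a three-step chain of citations and has no real technical obstacle: uniform recurrence (Prop.~3.5 of \cite{BuLuLuZa2}) plus $\Theta$-palindromic complete return words (Prop.~4.8 of \cite{BuLuLuZa2}) plus the characterization of finite $\Theta$-defect (\cref{finite_defect_CuT_characterizations}) immediately give the conclusion. The only point that requires a brief justification, rather than a direct quotation, is the closedness of $\Lu$ under $\Theta$, which I would handle in a single sentence by noting that arbitrarily long prefixes of $\uu$ are $\Theta$-palindromes by construction.
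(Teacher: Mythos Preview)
Your proposal is correct and follows exactly the paper's own argument: the paragraph preceding the corollary already assembles uniform recurrence (Prop.~3.5 of \cite{BuLuLuZa2}) and the $\Theta$-palindromic complete return word property (Prop.~4.8 of \cite{BuLuLuZa2}), then feeds these into \cref{finite_defect_CuT_characterizations}. The only difference is that you explicitly justify the closedness of $\Lu$ under $\Theta$, which the paper leaves implicit.
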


\section{Proofs}

In this section we give proofs of all three theorems stated in
Introduction. Although \Cref{thm:2} seems to be only a
refinement of \Cref{thm:1}, constructions of the
morphisms  $\varphi$ in their proofs differ substantially. It is
caused by stronger properties  we may exploit for a uniformly
recurrent word.

\begin{proof}[Proof of \Cref{thm:1}]
Recall that according to \Cref{lem:R_a_kon_def_je_CuT} the language $\Lu$ is closed under $\Ta$.

 If $\uu$ is an eventually periodic word with language closed under $\Theta_1$, then $\uu$ is purely periodic.  Any   purely periodic word
  is a morphic image of a word ${\bf v}$ over one-letter alphabet under the morphism which assigns
  to this letter the period of $\uu$. Therefore we may assume
  without loss of generality that $\uu$ is not eventually
  periodic.

  Since $D_{\Theta_1}(\uu) < +
\infty$, according to
\Cref{finite_defect_CuT_properties,theta_graf_rovnost}, there
exists $H \in \N$ such that
\begin{enumerate}
  \item \label{proof_prop_1} $\forall w \in \Lu$, $|w| > H$, occurrences of $w$ and $\Theta_1(w)$ alternate;
  \item \label{proof_prop_2} $\forall w \in \Lu$, $|w| > H$, every factor beginning with $w$, ending with $\Theta_1(w)$ and with
  no other occurrences of $w$ or $\Theta_1(w)$ is a $\Theta_1$-palindrome;
  \item \label{proof_prop_3} $\forall n \geq H$, every loop in $G_n(\uu)$ is a $\Theta_1$-palindrome and $G_n(\uu)$ after removing loops is a tree.
\end{enumerate}

Fix $n > H$.
If an edge $(b, \Theta_1(b)) $ in $G_n(\uu)$ is a loop,
then, according to the property \ref{proof_prop_3}, we have $b =
\Theta_1(b)$. If the edge $(b, \Theta_1(b)) $ connects  two
distinct vertices $(w_1,\Theta_1(w_1))$ and $(w_2,\Theta_1(w_2))$,
then there exist exactly two  $n$-simple paths $b$  and $ \Theta_1
(b)$ such that WLOG the $n$-simple path $b$ begins with $w_1$ and
ends with $w_2$ and the simple path $ \Theta_1 (b)$ begins with
$\Theta_1(w_2)$ and ends with $\Theta_1(w_1)$.
%Let WLOG $e$ begin with $w_1$ and end with $w_2$.

We assign to every $n$-simple path $b$ a new symbol $[b]$, i.e., we define the alphabet $\B$ as
$$
\B := \left \{ [b] \mid b \in \Lu \text{ is an } n\text{-simple
path} \right \}
$$
and on this alphabet we define an involutive antimorphism
$\Theta_2: \B^* \mapsto \B^*$ in the following way:
$$
\Theta_2([b]) := [\Theta_1(b)].
$$

We are now going to construct a suitable infinite word $\vv \in
\B^{\N}$. Let $(s_i)_{i \in \N}$ denote a strictly increasing
sequence of indices such that $s_i$ is an occurrence of RS or LS
factor of length $n$ and  every RS and LS factor of length $n$
occurs at some index $s_i$. We define $\vv = (v_i)_{i \in \N}$ by
the formula
$$
v_i = [b] \quad \text{   if  } \quad b = u_{s_i}u_{s_{i}+1} \ldots u_{s_{i+1}+n-1}.
$$

This construction can be done for any $n > H$. Since infinitely
many prefixes of $\uu$ are LS or RS factors, we can choose such $n
> H$ that the prefix of $\uu$ of length $n$ is LS or RS, i.e.,
$s_0 = 0$.

According to Proposition 12 in \cite{Sta2010}, to prove that $\vv$
is $\Theta_2$-rich we need to show the following:
\begin{enumerate}[(i)]
  \item \label{proof_rich_1} for every non-empty factor $ w \in \Lan(\vv)$, any factor $v$ beginning with $w$ and ending with $\Theta_2(w)$,
   with no other occurrences of $w$ or $\Theta_2(w)$, is a $\Theta_2$-palindrome;
  \item \label{proof_rich_2} for every letter $[b] \in \B$ such that $[b]\neq \Theta_2([b])$, the occurrences of $[b]$ and $\Theta_2([b])$
  in  the word $\vv$ alternate.
\end{enumerate}

Let us first verify \eqref{proof_rich_1}. Let $e$ and $f$ be
factors of $\vv$ such that $e$ is a prefix and $\Theta_2(e)$ is a
suffix of $f$ and there are no other occurrences of $e$ or
$\Theta_2(e)$ in $f$. In that case  there exist integers  $r \leq
k$ such that $f = [b_1][b_2] \ldots [b_k]$ and   $e = [b_1][b_2]
\ldots [b_r]$. The case $r = k$ is trivial. Suppose $r < k$. Since
$\vv$ is defined as a coding of consecutive occurrences of
$n$-simple paths in $\uu$, factor $f$ codes a certain segment of
the word $\uu$. Let us denote that segment $F = u_j \ldots u_l$
where $j = s_t$ for some $t \in \N$ and $l = s_{t + k - 1} + n
-1$. Factor $e$ codes in the same way a factor $E = u_j \ldots
u_h$ where $h = s_{t+r-1}+n-1$.

Due to the definition of $\Theta_2$, the fact that  $e$ is a prefix of
$f$ and $\Theta_2(e)$ is a suffix of $f$ ensures that $E$ is a
prefix of $F$ and $\Theta(E)$ is a suffix of $F$. Suppose $f$ is
not a $\Theta_2$-palindrome. This implies that $F$ is not a
$\Theta_1$-palindrome which contradicts the property
\ref{proof_prop_3}.

Let us now verify \eqref{proof_rich_2}. Consider $[b] \in
\B$ such that $[b]\neq \Theta_2([b])$. Moving along the infinite
word  $\uu= u_0u_1u_2 \ldots$ from  the left to the right with a
window  of width  $n$ corresponds to a walk in the graph $G_n(\uu)$.
The pair $b$ and $\Theta_1(b)$  of $n$-simple paths in $\uu$
represents an edge in $G_n(\uu)$ connecting two distinct vertices.
Moreover, moving along the $n$-simple paths $b$ and moving along
$\Theta_1(b)$ can be viewed as traversing that edge in opposite
directions. Since $G_n(\uu)$ after removing loops is a tree, the only
way to traverse an edge is alternately in one direction and in the
other. Thus, the occurrences of letters $[b]$ and $ \Theta_2([b])$
in $\vv$ alternate.

We have shown that $\vv$ is $\Theta_2$-rich. It is now obvious how
to define a morphism $\varphi: \B^* \mapsto \A^*$. If an
$n$-simple path $b$ equals $b = u_{s_i} u_{s_i + 1} \ldots
u_{s_{i+1} + n - 1} $, then we set $\varphi([b]) := u_{s_i} u_{s_i
+ 1} \ldots u_{s_{i+1} - 1}$.

\end{proof}

\begin{proof}[Proof of \Cref{thm:2}]
Recall again that according to \Cref{lem:R_a_kon_def_je_CuT} the language $\Lu$ is closed under $\Theta$.

Next, we show that infinitely many $\Theta$-palindromes are also prefixes of $\uu$.
 Consider an integer $H$
whose existence is guaranteed by Proposition
\ref{finite_defect_CuT_properties}  and denote by $w$  a prefix of
$\uu$  longer than $H$. Since occurrences of factors $w$ and
$\Theta(w)$ in $\uu$ alternate,  according to the same proposition,
the prefix of $\uu$ ending with the first occurrence of
$\Theta(w)$ is a $\Theta$-palindrome.

Let us denote by $p$ a  $\Theta$-palindromic prefix of $\uu$ with
length $|p| > K$ where $K$ is the constant from Proposition
\ref{finite_defect_CuT_characterizations}. All complete return
words of $p$ are $\Theta$-palindromes.  Since $\uu$ is uniformly
recurrent, there exist only finite number of  complete return
words to $p$. Let  $r^{(1)}, r^{(2)}, \ldots, r^{(M)}$ be the list
of all these return words. Any complete return word $ r^{(i)}$ has
the form  $q^{(i)}p = r^{(i)}$ for some factor $q^{(i)}$, usually
called return word of $p$. Since $r^{(i)}$ and  $p$  are
$\Theta$-palindromes, we have
\begin{equation}\label{eq:zamena1}
p\Theta(q^{(i)}) = q^{(i)}p \ \hbox{\ for any return word } \
q^{(i)}.
\end{equation}

 Let us define a new alphabet
 $\mathcal{B} = \{ 1,2,\ldots, M\}$ and morphism $\varphi:
 \mathcal{B}^* \to  \mathcal{A}^*$ by the prescription
 $$\varphi(i)= q^{(i)},  \quad \hbox{for} \ \   i = 1,2,\ldots, M\,.$$
First we will check the validity of the relation
\begin{equation}\label{eq:zamena2}
\Theta\bigl(\varphi(w)p\bigr)= \varphi\bigl(\Theta_0(w)\bigr)p \ \
\  \hbox{for any }  \ w \in \mathcal{B}^*\,.
\end{equation}
Let $w = i_1i_2 \ldots i_n$. Then $
\Theta\bigl(\varphi(i_1i_2\ldots i_n)p\bigr)$ equals to
$$
\Theta(p)\Theta\bigl(\varphi(i_n)\bigr)\Theta\bigl(\varphi(i_{n-1})\bigr)\ldots\Theta\bigl(\varphi(i_1)\bigr)=
p\Theta\bigl(q^{(i_n)}\bigr)\Theta\bigl(q^{(i_{n-1})}\bigr)\ldots\Theta\bigl(q^{(i_1)}\bigr)
$$
and we may apply   gradually $n$ times  the equality
\eqref{eq:zamena1} to rewrite the right-hand side as
$$
q^{(i_n)}q^{(i_{n-1})}\ldots q^{(i_1)} p =
\varphi(i_n)\varphi(i_{n-1})\ldots \varphi({i_1})p =
\varphi\bigl(\Theta_0(i_1i_2\ldots i_n)\bigr)p.
$$
This proves the relation \eqref{eq:zamena2}.

An important property  of the morphism $\varphi$ is its
injectivity. Indeed,  in accordance with the definition,  number
of occurrences of the factor $p$ in $\varphi(w)p$ equals to the
number of letters in $w$ plus one.  Moreover,  each occurrence of
$p$ in $\varphi(w)p$ indicates beginning of an image of a letter
under $\varphi$. Therefore $\varphi(w)p= \varphi(v)p$  necessarily
implies  $w=v$.

Let us finally define the word $\vv$. As $p$ is a prefix of $\uu$,
the word $\uu$ can be written as a concatenation of return words
$q^{(i)}$ and thus we can determine a sequence $\vv =(v_n) \in
\mathcal{B}^\mathbb{N}$ such that
$$\uu =q^{(v_0)}q^{(v_1)}q^{(v_2)}\ldots $$
Directly from the definition of $\vv$ we have  $\uu =
\varphi(\vv)$. Since  $\uu$  is uniformly recurrent, the word
$\vv$ is uniformly recurrent as well. To prove that $\vv$ is a
$\Theta_0$-rich word, we will  show  that any complete return word of any
$\Theta_0$-palindrome in the word ${\mathbf v}$ is
a~$\Theta_0$-palindrome as well. According to Theorem 2.14  in \cite{GlJuWiZa},
this implies the $\Theta_0$-richness of ${\mathbf v}$.

Let $s$ be a~$\Theta_0$-palindrome in ${\mathbf v}$ and $w$ its
complete return word. Then $\varphi(w)p$ has precisely two
occurrences of the factor $\varphi(s)p$. Since  $s$ is
a~$\Theta_0$-palindrome, we have according to \eqref{eq:zamena2} that
$\varphi(s)p$ is a~$\Theta$-palindrome of length $|\varphi(s)p|
\geq |p|
> K$. Therefore $\varphi(w)p$ is a~complete return word of a long enough $\Theta$-palindrome and according to our assumption
$\varphi(w)p$ is a~$\Theta$-palindrome as well. Therefore by using
 \eqref{eq:zamena2} we have  $$\varphi(w)p =
\Theta\bigl(\varphi(w)p\bigr)= \varphi\bigl(\Theta_0(w)\bigr)p$$
and injectivity of $\varphi$ gives $w=\Theta_0(w)$, as we claimed.

\end{proof}

Theorem 6.1 in \cite{BuLuLu_cha} states that every standard $\Theta$-episturmian word is an image of a standard episturmian word.
Again, the role of $\Tr$ can be perceived as more important.
Also, compared to \Cref{thm:2}, it may be seen as a special case since $\Theta$-episturmian words, according to \Cref{co:std_seed_are_rich},
have finite $\Theta$-defect.

\begin{proof}[Proof of \Cref{thm:3}]

If $\uu$ is periodic, then the claim is trivial.
Suppose $\uu$ is aperiodic.

We are going repeat the proof of \Cref{thm:2} with a more specific
choice of $p$. Theorem 4.4 in \cite{BuLuLuZa2} implies that  there
exists $L \in \N$ such that any LS factor of $\uu$ longer than $L$
is a prefix of $\uu$. Without loss of generality, we may assume
that the constant $L$ is already chosen in such a way that all
prefixes of $\uu$ longer than $L$ have the same left extensions.
Let us denote their number by $M$. According to the same theorem,
infinitely many prefixes of $\uu$ are $\Theta$-palindromes and
thus bispecial factors as well.

According to Corollary \ref{co:std_seed_are_rich}, $\uu$ has finite
$\Theta$-palindromic defect.   Let $K$ be the constant from
\Cref{finite_defect_CuT_characterizations}. Altogether, there
exists a bispecial factor $p$, $|p|
> \max \{L,K\}$, such that it is a prefix of $\uu$ and a
$\Theta$-palindrome.  Since $p$ is longer than  $K$,  all complete
return words to  $p$ are $\Theta $-palindromes.  As $p$ is the
unique left special factor of length $|p|$ in $\uu$,  its return
words (i.e., complete return words after erasing the suffix $p$)
end with distinct letters. It means that there are exactly $M$
return words of $p$, denoted again $q^{(i)}$. Let us recall that
by $M$ we denoted  number of left extensions of some factor,
therefore $M\leq \# \A$.

The construction of the the word $\vv$ and prescription of the
morphism $\varphi$ over the alphabet $ \B = \{1,2,\ldots,M\}$ can
be done in exactly the same way as in the proof of \Cref{thm:2}.
It remains to show that $\vv$ is an Arnoux-Rauzy word.

According to \Cref{thm:2} we know that $\vv$ is $\Tr$-rich and uniformly recurrent.
Applying \Cref{lem:R_a_kon_def_je_CuT} we deduce that the language $\Lan(\vv)$ is closed under reversal.

Suppose there exist $v, w \in \Lan(\vv)$, two LS factors such that
$|v| = |w|$ and $v \neq w$. Since the words $q^{(i)}$ end with
distinct letters, it is clear that $\varphi(w)p$ is a LS factor of
$\uu$ and it has the same number of left extensions as $w$. The
same holds for $\varphi(v)p$. Since both these factors have their
length greater than or equal to $|p| > L$ and are both LS, one
must be prefix of another. Let WLOG $\varphi(w)p$ be a prefix of
$\varphi(v)p$, i.e., $\varphi(v)p = \varphi(ww')p$. The
injectivity of $\varphi$ implies $w' = \varepsilon$ and thus $v =
w$ -- a contradiction.
\end{proof}

\begin{rem}\Cref{thm:3} can be seen as a generalization of Theorem 6.1
in \cite{BuLuLu_cha} to $\Theta$-standard words with seed.
\end{rem}
\begin{rem}
Note also that the proof of \Cref{thm:3} is in fact a combination
of methods used in preceding proofs of \Cref{thm:1,thm:2} in the
sense that the  set of complete return words $r^{(i)}$  of the
factor $p$  and the set of  $|p|$-simple paths  in $\uu$ coincide.
\end{rem}

\section{Acknowledgement}

We acknowledge financial support by the Czech Science Foundation
grant GA\v CR 201/09/0584, by the grants MSM6840770039 and LC06002
of the Ministry of Education, Youth, and Sports of the Czech
Republic, and by the grant of the Grant Agency of the Czech Technical University in Prague.

% PRIDAT REFERENCE Rich and Periodic-Like Words
%
% Michelangelo Bucci, Aldo de Luca and Alessandro De Luca
%

%\bibliographystyle{siam}
%\bibliographystyle{plain}
%\bibliographystyle{nature}
%\IfFileExists{biblio.bib}{\bibliography{biblio}}{\bibliography{../!bibliography/biblio}}

\begin{thebibliography}{10}

\bibitem{AnZaZo}
Vyoma Anne, Luca~Q. Zamboni, and Ioana Zorca.
\newblock Palindromes and pseudo-palindromes in episturmian and
  pseudo-episturmian infinite words.
\newblock In {\em in: S. Brlek, C. Reutenauer (Eds.), Words 2005, n. 36 in
  Publications du LACIM, 2005}, pages 91--100.

\bibitem{BaMaPe}
Peter Bal{\'a}{\v z}i, Zuzana Mas{\'a}kov{\'a}, and Edita Pelantov{\'a}.
\newblock Factor versus palindromic complexity of uniformly recurrent infinite
  words.
\newblock {\em Theoret. Comput. Sci.}, 380(3):266--275, 2007.

\bibitem{BaPeSta3}
L'ubom\'{\i}ra Balkov\'a, Edita Pelantov\'a, and {\v{S}}t\v{e}p\'an Starosta.
\newblock Infinite words with finite defect.
\newblock {\em To appear in Adv. in Appl. Math.}, 2011.

\bibitem{BlBrGaLa}
Alexandre {Blondin Mass{\'e}}, Srecko Brlek, Ariane Garon, and S{\'e}bastien
  Labb{\'e}.
\newblock Combinatorial properties of f-palindromes in the {T}hue-{M}orse
  sequence.
\newblock {\em Pure Math. Appl.}, 19(2--3):39–--52, 2008.

\bibitem{BrHaNiRe}
Srecko Brlek, Sylvie Hamel, Maurice Nivat, and Christophe Reutenauer.
\newblock On the palindromic complexity of infinite words.
\newblock {\em Internat. J. Found. Comput.}, 15(2):293--306, 2004.

\bibitem{BuLuLu_cha}
Michelangelo Bucci, Aldo de~Luca, and Alessandro De~Luca.
\newblock Characteristic morphisms of generalized episturmian words.
\newblock {\em Theor. Comput. Sci.}, 410:2840--2859, 2009.

\bibitem{BuLuLu}
Michelangelo Bucci, Aldo de~Luca, and Alessandro~De Luca.
\newblock Rich and periodic-like words.
\newblock In {\em Proceedings of the 13th International Conference on
  Developments in Language Theory}, pages 145--155, Berlin, Heidelberg, 2009.
  Springer-Verlag.

\bibitem{BuLuLuZa2}
Michelangelo Bucci, Aldo de~Luca, Alessandro~De Luca, and Luca~Q. Zamboni.
\newblock On different generalizations of episturmian words.
\newblock {\em Theoret. Comput. Sci.}, 393(1-3):23--36, 2008.

\bibitem{BuLuLuZa}
Michelangelo Bucci, Aldo de~Luca, Alessandro~De Luca, and Luca~Q. Zamboni.
\newblock On theta-episturmian words.
\newblock {\em European J. Combin.}, 30(2):473--479, 2009.

\bibitem{BuLuGlZa}
Michelangelo Bucci, Alessandro~De Luca, Amy Glen, and Luca~Q. Zamboni.
\newblock A connection between palindromic and factor complexity using return
  words.
\newblock {\em Adv. in Appl. Math.}, 42(1):60 -- 74, 2009.

\bibitem{DrJuPi}
Xavier Droubay, Jacques Justin, and Giuseppe Pirillo.
\newblock Episturmian words and some constructions of de {L}uca and {R}auzy.
\newblock {\em Theoret. Comput. Sci.}, 255(1-2):539--553, 2001.

\bibitem{GlJuWiZa}
Amy Glen, Jacques Justin, Steve Widmer, and Luca~Q. Zamboni.
\newblock Palindromic richness.
\newblock {\em European J. Combin.}, 30(2):510--531, 2009.

\bibitem{Sta2010}
{\v S}t{\v e}p{\'a}n Starosta.
\newblock On theta-palindromic richness.
\newblock {\em Theoretical Computer Science}, 412(12-14):1111--1121, 2011.

\end{thebibliography}

\end{document}